\documentclass[11pt,a4paper]{article}

\usepackage[utf8]{inputenc}
\usepackage[T1]{fontenc}
\usepackage{amsmath,amssymb,amsthm}
\usepackage{geometry}
\usepackage{hyperref}

\newtheorem{theorem}{Theorem}[section]
\newtheorem{lemma}[theorem]{Lemma}
\newtheorem{remark}{Remark}[section]

\newcommand{\wt}{\widetilde}
\newcommand{\floor}[1]{\lfloor #1 \rfloor}
\newcommand{\ceil}[1]{\lceil #1 \rceil}
\DeclareMathOperator{\E}{\mathbb{E}}

\begin{document}

\title{Uniform transport processes and the Brownian sheet:\\
       the one-parameter rate of convergence is preserved}

\author{Carles Rovira%
  \thanks{Facultat de Matem\`atiques i Inform\`atica,
    Universitat de Barcelona, Gran Via 585, 08007 Barcelona, Spain.
    E-mail: \texttt{Carles.Rovira@ub.edu}.
    Supported by grant PID2025-173091NB-I00
    funded by MCIN/AEI/10.13039/501100011033.}}

\date{ October 16, 2026}

\maketitle

\noindent\textbf{Keywords:}
Brownian sheet;
uniform transport processes;
strong approximation;
rate of convergence;
Skorohod embedding;
Borell--TIS inequality.

\smallskip
\noindent\textbf{MSC 2020:}
 60F15, 60F17,  60G15, 60J65.

\begin{abstract}
In~\cite{BFR} we constructed processes $W_n$, built from a family of
independent standard Poisson processes, that converge almost surely to the
Brownian sheet uniformly on $[0,1]^2$, and in~\cite{Rov} we proved that
$P(\max_{[0,1]^2}|W_n-W|>\alpha n^{-\beta})=o(n^{-q})$ for every
$\beta<\lambda/2$, where $\lambda\in(0,\tfrac{1}{5})$ is the parameter of the
construction; this gives rates $n^{-\beta}$ with $\beta<\tfrac{1}{10}$.
In this paper, we show that the same construction, with $\lambda\geq 1$,
satisfies
\[
  P\!\left(\max_{(s,t)\in[0,1]^2}
     |W_n(s,t)-W(s,t)| > \alpha\, n^{-1/2}(\log n)^3\right) = o(n^{-q})
  \quad\text{for every }q>0,
\]
which is the one-parameter rate of Gorostiza and Griego~\cite{GG} up to a
factor $(\log n)^{1/2}$.  In particular the restriction $\lambda<\tfrac{1}{5}$
is not needed.
\end{abstract}

\section{Introduction}

Let $W=\{W(s,t):(s,t)\in[0,1]^2\}$ be a Brownian sheet.
Griego, Heath and Ruiz-Moncayo~\cite{GHR} constructed realizations of uniform
transport processes converging almost surely to Brownian motion, uniformly on
$[0,1]$; the rate of convergence was obtained by Gorostiza and
Griego~\cite{GG} and by Cs\"{o}rg\H{o} and Horv\'{a}th~\cite{CH}.
In \cite{BFR} we extended the almost sure convergence
to the Brownian sheet: they cut $[0,1]^2$ into $\floor{n^\lambda}$ vertical
strips, observe that the sheet integrated over each strip is, after rescaling,
a standard Brownian motion, approximate each of these independent Brownian
motions by a uniform transport process as in~\cite{GHR}, and sum over the
strips. They obtained almost sure uniform convergence for
$\lambda\in(0,\tfrac{1}{5})$ and left the rate of convergence as an open
problem. In ~\cite{Rov}  we obtained the rate
\[
  P\!\left(\max_{[0,1]^2}|W_n-W|>\alpha n^{-\beta}\right)=o(n^{-q}),
  \qquad\beta<\frac{\lambda}{2},\quad
  \lambda\in\!\left(0,\tfrac{1}{5}\right),
\]
and observed~\cite[Remark~2.3]{Rov} that it is worse than the one-parameter
rate $n^{-1/2}(\log n)^{5/2}$ of~\cite{GG}, because the latter, valid for
each of the $\floor{n^\lambda}$ Brownian motions, could not be kept for their
sum.

The purpose of this note is to show that the one-parameter rate is kept for
the sum, up to a factor $(\log n)^{1/2}$.  In~\cite{BFR,Rov} the errors of
the different strips are added through the triangle inequality,  but these errors are
independent and centered, so their partial sums behave like a random walk
rather than like a sum of absolute values.  On the other hand, using the
Borell--TIS inequality, it no longer constrains $\lambda$ from above, so we
can take $\lambda\geq 1$.

Section~\ref{sec:construction} recalls the construction and states precisely,
as a hypothesis, the properties of the one-parameter approximation that are
used.  Section~\ref{sec:main} contains the results and Section~\ref{sec:proof}
the proof.  The probabilistic tools used in the proof are collected in
Section~\ref{sec:tools}.

\section{The construction and the one-parameter input}
\label{sec:construction}

We follow~\cite{BFR,Rov}.  Fix $\lambda>0$ and write $L=\floor{n^\lambda}$,
assuming $n$ large enough that $L\geq 1$.  It is convenient to assume that
$W$ is the restriction to $[0,1]^2$ of a Brownian sheet defined on
$[0,2]\times[0,1]$; this is no loss of generality, since the statements are
about the existence of suitable realizations.  For $k=1,\ldots,L$ put
\[
  W_k(t)=W\!\left(\tfrac{k}{n^\lambda},t\right)
          -W\!\left(\tfrac{k-1}{n^\lambda},t\right),
  \qquad
  \wt W_k(t)=n^{\lambda/2}W_k(t),\quad t\in[0,1].
\]
The processes $\wt W_1,\ldots,\wt W_L$ are independent standard Brownian
motions on $[0,1]$ (they are centered, Gaussian, continuous, and
$\E[\wt W_k(t)\wt W_{k'}(t')]=(t\wedge t')\mathbf{1}_{\{k=k'\}}$), and
$W(l/n^\lambda,t)=\sum_{k\leq l}W_k(t)$.

For each $k$, a uniform transport process $\wt W_k^{(n)}$ is constructed
from $\wt W_k$ by the Skorohod embedding procedure of~\cite{GHR}.
Throughout the paper we write
\[
  r_n=n^{-1/2}(\log n)^{5/2},\qquad n\geq 2,
\]
for the one-parameter rate of Gorostiza and Griego~\cite{GG}.
In the proofs we will use that
\begin{equation}\label{eq:rn-bound}
  r_n\geq n^{-1}\quad\text{for all }n\geq 3.
\end{equation}
We use the normalization of~\cite{GG}, in which the transport process moves
with speed $n$.  The transport process of the $k$-th strip has the law of
\[
  X_n(t)=n(-1)^A\int_0^t(-1)^{N(n^2 r)}\,dr
         =\frac{1}{n}(-1)^A\int_0^{n^2 t}(-1)^{N(u)}\,du,
  \qquad t\in[0,1],
\]
where $N$ is a standard Poisson process and $A$ an independent
Bernoulli$(\tfrac{1}{2})$ variable: the process moves at speed $n$ and
reverses its direction at the jump times of a Poisson process of rate $n^2$.
It is realized on the same probability space as $\wt W_k$ as
in~\cite{GHR,BFR}.  This is also the process constructed in the proof
of~\cite[Theorem~2.1]{BFR}: there the pieces have slope $\pm n$, durations
exponential with parameter $2n^2$ and i.i.d.\ symmetric signs, and since each
new sign differs from the previous one with probability $\tfrac{1}{2}$,
independently, the direction is reversed at the jump times of a Poisson
process of rate $n^2$.

From the construction of the approximations we get the following properties:

\begin{itemize}
\item[\textbf{(H1)}] For every $n$ the pairs $(\wt W_k,\wt W_k^{(n)})$,
  $k\leq L$, are independent and identically distributed.  In particular
  the error processes $D_k=\wt W_k^{(n)}-\wt W_k$, $k\leq L$, are
  independent and identically distributed.

\item[\textbf{(H2)}] For every $n$, $k$ and $t\in[0,1]$,
  $\E\wt W_k^{(n)}(t)=0$; and almost surely, for all $t,t'\in[0,1]$,
  \[
    |\wt W_k^{(n)}(t)|\leq n,\qquad
    |\wt W_k^{(n)}(t)-\wt W_k^{(n)}(t')|\leq n|t-t'|.
  \]

\item[\textbf{(H3)}] For every $q>0$ there are $\alpha_q>0$ and $n_q\geq 3$,
  depending on $q$ but not on $n$ or $k$, such that, with
  $r_n=n^{-1/2}(\log n)^{5/2}$,
  \begin{equation}\label{eq:H3}
    P\!\left(\max_{0\leq t\leq 1}
      |\wt W_k^{(n)}(t)-\wt W_k(t)|>\alpha_q r_n\right)\leq n^{-q}
    \quad\text{for all }n\geq n_q\text{ and all }k\leq L.
  \end{equation}
\end{itemize}

The approximating process is defined at the strip boundaries by
\[
  W_n\!\left(\tfrac{l}{n^\lambda},t\right)=\sum_{k=1}^l W_k^{(n)}(t),
  \qquad
  W_k^{(n)}(t)=n^{-\lambda/2}\wt W_k^{(n)}(t),\quad 0\leq l\leq L,
\]
by linear interpolation in $s$ on $[0,L/n^\lambda]$, and by
$W_n(s,t)=W_n(L/n^\lambda,t)$ for $s\in[L/n^\lambda,1]$.

\section{Main theorem}
\label{sec:main}

The new bounds on the errors for these approximations are collected in the
following theorem.

\begin{theorem}\label{thm:main-bound}
Let $\lambda>0$.  For every $q>0$ there are constants $C$ and $n_0$,
depending on $q$, $\lambda$ and on the constants of~\emph{(H3)}, such that
for all $n\geq n_0$
\[
  P\!\left(\max_{(s,t)\in[0,1]^2}|W_n(s,t)-W(s,t)|
    >C\sqrt{\log n}\!\left(\frac{(\log n)^{5/2}}{\sqrt{n}}
      +n^{-\lambda/2}\right)\right)\leq n^{-q}.
\]
\end{theorem}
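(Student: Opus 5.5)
We split the error at the strip boundaries and in between. For $s\in[(l-1)/n^\lambda,l/n^\lambda]$ with $l\leq L$, the function $W_n(\cdot,t)$ is the linear interpolation of its values at $s_{l-1}=(l-1)/n^\lambda$ and $s_l=l/n^\lambda$. Hence
\[
  |W_n(s,t)-W(s,t)|\leq \max_{j\in\{l-1,l\}}|W_n(s_j,t)-W(s_j,t)|
   +\max_{j\in\{l-1,l\}}|W(s_j,t)-W(s,t)|.
\]
On $[L/n^\lambda,1]$ the analogous bound holds with $j=L$. So the maximum is bounded by
\[
  \max_{0\leq l\leq L}\max_{t}\Bigl|n^{-\lambda/2}\sum_{k\leq l}D_k(t)\Bigr|
  +\sup_{|s-s'|\leq n^{-\lambda},\,t}|W(s,t)-W(s',t)|.
\]
The second term is the modulus of continuity of the Brownian sheet in $s$ at scale $n^{-\lambda}$. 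Cover $[0,1]$ by $O(n^\lambda)$ blocks of width $2n^{-\lambda}$. On each block, $W(s,t)-W(s_j,t)$ is a centered Gaussian field with variance at most $2n^{-\lambda}$, and its expected supremum is $O(n^{-\lambda/2})$ by Dudley or by scaling. Borell--TIS then gives a tail $\exp(-c\,x^2 n^{\lambda})$ at level $x$ above the mean. Taking $x=C\sqrt{\log n}\,n^{-\lambda/2}$ and a union bound over the $O(n^\lambda)$ blocks gives probability $\leq n^{-q}/2$. This produces the $\sqrt{\log n}\,n^{-\lambda/2}$ term.

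The main term is the maximal partial sum $S_l(t)=\sum_{k\leq l}D_k(t)$. Here we use independence and centering (H1)--(H2) instead of the triangle inequality. Let $\alpha=\alpha_{q'}$ for a large $q'$. Truncate: $\bar D_k=D_k\mathbf 1_{\{\|D_k\|_\infty\leq\alpha r_n\}}$. By (H3) and a union bound over $k\leq L\leq n^\lambda$, $\bar D_k=D_k$ for all $k$ except on a set of probability $n^{\lambda-q'}$. The truncated variables are bounded by $\alpha r_n$ but no longer exactly centered. By Cauchy--Schwarz, their means satisfy
\[
  |\E\bar D_k(t)|=|\E D_k\mathbf 1_{\{\|D_k\|>\alpha r_n\}}|\leq (\E\|D_k\|^2)^{1/2}n^{-q'/2}\leq C n\cdot n^{-q'/2},
\]
using $|D_k|\leq n+|\wt W_k|$. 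So $L$ times the mean is negligible for large $q'$.

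Next fix $t$ in a grid $G$ of mesh $n^{-3}$. Apply Hoeffding's inequality together with Lévy's or Ottaviani's maximal inequality for sums of independent symmetric or centered bounded variables. This gives
\[
  P\Bigl(\max_{l\leq L}|\textstyle\sum_{k\leq l}(\bar D_k(t)-\E\bar D_k(t))|>x\Bigr)\leq 4\exp\bigl(-x^2/(8L\alpha^2r_n^2)\bigr).
\]
Take $x=C\sqrt{L}\,r_n\sqrt{\log n}$ and multiply by $n^{-\lambda/2}\leq L^{-1/2}$. This gives the level $C\sqrt{\log n}\,r_n$ with probability $n^{-q-3}$ per grid point, and a union bound over $|G|=n^3$ points finishes the grid.

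Passing from the grid to all $t$ is the step I expect to be the main obstacle. Between grid points, the transport parts are Lipschitz with constant $n$ by (H2). Summed, they contribute at most $n^{-\lambda/2}L\,n\cdot n^{-3}\leq n^{\lambda/2-2}$, which is polynomially small. Choosing the grid mesh $n^{-3-\lambda}$ makes it $\leq r_n$ using \eqref{eq:rn-bound}. The Brownian parts contribute $\max_l\sup_{|t-t'|\leq\delta}|W(s_l,t)-W(s_l,t')|$, the modulus of the sheet in $t$. This is again controlled by Borell--TIS plus a union bound: it is $O(\sqrt{\delta\log n})$, far below $r_n$. This increment step must be done on the untruncated $D_k$ on the good event, so the order of the steps is: restrict to the event where all truncations are inactive, then use the grid bound for the $\bar D_k$ there. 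Collecting the three exceptional sets, each of probability $\leq n^{-q}/3$ for $n\geq n_0$, proves the theorem.
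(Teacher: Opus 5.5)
Your proposal is correct and follows essentially the same route as the paper. Both use the deterministic reduction to the strip boundaries plus the $s$-modulus of the sheet (Borell--TIS and a union bound), then truncation with a Cauchy--Schwarz bound on the means, Hoeffding on a $t$-grid of mesh $n^{-3-\lambda}$, and finally (H2) plus Borell--TIS between grid points. The differences are minor:
\begin{itemize}
\item You control $\max_{l\le L}$ with Ottaviani's inequality, where the paper simply union-bounds over all pairs $(l,t_i)$. L\'evy's inequality would need symmetry, which (H1)--(H3) do not provide.
\item Once you refine the mesh to $n^{-3-\lambda}$, the per-point tail must be $n^{-q-3-\lambda}$ rather than $n^{-q-3}$. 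Enlarging $C$ takes care of this.
\end{itemize}
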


Finally, the main result of this paper is collected in the following theorem.

\begin{theorem}\label{thm:main-rate}
For every $q>0$ there are $\alpha>0$ and $n_0$ such that for $n\geq n_0$
\[
  P\!\left(\max_{(s,t)\in[0,1]^2}|W_n(s,t)-W(s,t)|
    >\alpha\, n^{-1/2}(\log n)^3\right)\leq n^{-q},
\]
and $\max_{[0,1]^2}|W_n-W|=O(n^{-1/2}(\log n)^3)$ almost surely.
In particular the almost sure convergence of $W_n$ to $W$ holds.
\end{theorem}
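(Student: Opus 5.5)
Theorem~\ref{thm:main-rate} will be derived from Theorem~\ref{thm:main-bound} by choosing $\lambda\geq 1$. The construction depends on $\lambda$ only through $L=\floor{n^\lambda}$, and the main result is stated for the construction with $\lambda\geq1$, so we fix any such $\lambda$, for instance $\lambda=1$. Then $n^{-\lambda/2}\leq n^{-1/2}\leq n^{-1/2}(\log n)^{5/2}$ for $n\geq 3$. The threshold in Theorem~\ref{thm:main-bound} is therefore at most
\[
  C\sqrt{\log n}\cdot 2\,\frac{(\log n)^{5/2}}{\sqrt n}=2C\,n^{-1/2}(\log n)^3 .
\]
Given $q>0$, we take the constants $C,n_0$ furnished by Theorem~\ref{thm:main-bound} for this $q$ and this $\lambda$, and we set $\alpha=2C$. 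The event $\{\max|W_n-W|>\alpha n^{-1/2}(\log n)^3\}$ is contained in the event controlled by Theorem~\ref{thm:main-bound}. Its probability is thus at most $n^{-q}$ for $n\geq\max(n_0,3)$, which proves the probability bound. The constants in (H3) are fixed by the one-parameter construction, so $C$ depends only on $q$ and $\lambda$, as required.

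For the almost sure statement, we apply the first part with $q=2$ and use Borel--Cantelli. Since $\sum_n n^{-2}<\infty$, almost surely only finitely many $n$ satisfy $\max_{[0,1]^2}|W_n-W|>\alpha n^{-1/2}(\log n)^3$. Hence
\[
  \limsup_{n\to\infty}\,\frac{\sqrt n}{(\log n)^3}\,\max_{[0,1]^2}|W_n-W|\leq\alpha \quad\text{a.s.},
\]
which is exactly the $O(n^{-1/2}(\log n)^3)$ statement. Since $n^{-1/2}(\log n)^3\to0$, uniform almost sure convergence of $W_n$ to $W$ follows.

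There is no substantial obstacle, because all the analytic work sits in Theorem~\ref{thm:main-bound}. The one point to check is that Borel--Cantelli applies: all the $W_n$, $n\geq1$, must be defined on a single probability space together with the fixed sheet $W$. This holds in the construction of Section~\ref{sec:construction}, where each $\wt W_k^{(n)}$ is realized on the same space as $\wt W_k$. Note that Borel--Cantelli does not require any independence across $n$, so summability of the probabilities suffices.
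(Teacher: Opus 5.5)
Your proposal is correct and follows the paper's proof essentially step for step. You fix $\lambda\geq 1$ and absorb $n^{-\lambda/2}$ into $r_n$, so that the threshold of Theorem~\ref{thm:main-bound} is at most $2C\,n^{-1/2}(\log n)^3$, which gives $\alpha=2C$; you then apply the first Borel--Cantelli lemma with $q=2$. Your explicit restriction to $n\geq 3$ (needed for $n^{-1/2}\leq r_n$) and your remark that all $W_n$ live on one probability space with the same $W$ fill in details the paper leaves implicit.
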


\begin{proof}
Recall $r_n=n^{-1/2}(\log n)^{5/2}$ and apply Theorem~\ref{thm:main-bound}.
Choose any $\lambda\geq 1$.  Then $n^{-\lambda/2}\leq n^{-1/2}\leq r_n$, so
the bound is at most $2Cr_n(\log n)^{1/2}$.

Using $q=2$: with
$A_n=\{\max_{[0,1]^2}|W_n-W|>\alpha n^{-1/2}(\log n)^3\}$ we have
\[
  \sum_{n\geq 1}P(A_n)\leq n_0+\sum_{n\geq n_0}n^{-2}<\infty.
\]
By the Borel--Cantelli lemma, almost surely $A_n$ occurs for finitely many
$n$ only, i.e.\ $\max_{[0,1]^2}|W_n-W|\leq\alpha n^{-1/2}(\log n)^3$ for
all $n$ large enough.
\end{proof}

\begin{remark}\label{rem:rate-comparison}
The rates of~\cite{Rov} are $n^{-\beta}$ with $\beta<\lambda/2<\tfrac{1}{10}$.
Theorem~\ref{thm:main-rate} gives $n^{-1/2}(\log n)^3$, the rate
$n^{-1/2}(\log n)^{5/2}$ of~\cite{GG} for one Brownian motion up to a factor
$(\log n)^{1/2}$.  This answers the question raised in~\cite[Remark~2.3]{Rov}.
\end{remark}

\begin{remark}\label{rem:d-param}
\emph{($d$-parameter Wiener processes.)}
The same method applies to the $d$-parameter Wiener process, $d\geq 2$,
approximated as in~\cite[Section~4]{BFR}: the strips are replaced by the
$\floor{n^\lambda}^{d-1}$ boxes
$\prod_{i<d}((k_i-1)/n^\lambda,k_i/n^\lambda]\times[0,1]$, each carrying an
independent transport process, the normalization $n^{-\lambda/2}$ becomes
$n^{-\lambda(d-1)/2}$, and $W_n$ is defined by $(d-1)$-linear interpolation.
\end{remark}

\section{Proof of Theorem~\ref{thm:main-bound}}
\label{sec:proof}

Fix $q>0$. All constants below may depend on $q$, $\lambda$ and the constants
of (H3), but not on $n$, and $n$ is assumed large enough.
\subsection{Reduction of the problem}

For $0\leq l\leq L$ and $t\in[0,1]$ put
\[
  \Delta_n(l,t)
  =W_n\!\left(\tfrac{l}{n^\lambda},t\right)
   -W\!\left(\tfrac{l}{n^\lambda},t\right)
  =n^{-\lambda/2}\sum_{k=1}^l D_k(t),
  \qquad D_k=\wt W_k^{(n)}-\wt W_k,
\]
and, for $0\leq l\leq L$,
\[
  Z_l=\sup\!\left\{
    \left|W(s,t)-W\!\left(\tfrac{l}{n^\lambda},t\right)\right|:
    s\in\left[\tfrac{l}{n^\lambda},\tfrac{l+1}{n^\lambda}\right],\,
    t\in[0,1]\right\}.
\]
The following lemma allows us to reduce the study of $|W_n(s,t)-W(s,t)|$ to
the study of $\max_{0\leq l\leq L}|\Delta_n(l,t)|$ and
$\max_{0\leq l\leq L}Z_l$.

\begin{lemma}\label{lem:det-reduction}
For every $(s,t)\in[0,1]^2$,
\[
  |W_n(s,t)-W(s,t)|\leq\max_{0\leq l\leq L}|\Delta_n(l,t)|
                        +2\max_{0\leq l\leq L}Z_l.
\]
\end{lemma}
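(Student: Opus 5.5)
We prove the inequality pointwise by splitting into the two regions in the definition of $W_n$. Fix $(s,t)$. In the first case $s\in[L/n^\lambda,1]$, and in the second $s\in[l/n^\lambda,(l+1)/n^\lambda]$ for some $0\le l\le L-1$. In each case we compare $W_n(s,t)$ and $W(s,t)$ with their values at nearby strip boundaries, using only the triangle inequality and the definition of $Z_l$.

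\emph{Case $s\in[L/n^\lambda,1]$.} Here $W_n(s,t)=W_n(L/n^\lambda,t)$, so
\[
|W_n(s,t)-W(s,t)|\le|\Delta_n(L,t)|+\bigl|W(\tfrac{L}{n^\lambda},t)-W(s,t)\bigr|.
\]
Since $L=\floor{n^\lambda}$, we have $n^\lambda<L+1$, so $1\le (L+1)/n^\lambda$. Hence $s\in[L/n^\lambda,(L+1)/n^\lambda]$, and the second term is at most $Z_L$. This is where the standing assumption that $W$ extends to $[0,2]\times[0,1]$ is needed: it makes $Z_L$ well defined, because $(L+1)/n^\lambda\le 2$. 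The bound in this case is therefore $\max_l|\Delta_n(l,t)|+Z_L$.

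\emph{Case $s\in[l/n^\lambda,(l+1)/n^\lambda]$ with $l\le L-1$.} Write $s=(1-\theta)l/n^\lambda+\theta(l+1)/n^\lambda$ with $\theta\in[0,1]$. By the linear interpolation defining $W_n$,
\[
W_n(s,t)=(1-\theta)W_n(\tfrac{l}{n^\lambda},t)+\theta W_n(\tfrac{l+1}{n^\lambda},t).
\]
We split $W(s,t)$ in the same convex combination and obtain
\[
W_n(s,t)-W(s,t)=(1-\theta)\Delta_n(l,t)+\theta\Delta_n(l+1,t)
+\theta\Bigl[W(\tfrac{l+1}{n^\lambda},t)-W(s,t)\Bigr]
+(1-\theta)\Bigl[W(\tfrac{l}{n^\lambda},t)-W(s,t)\Bigr].
\]
The first two terms together are bounded by $\max_l|\Delta_n(l,t)|$. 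The last bracket is bounded by $Z_l$. For the third bracket, write
\[
W(\tfrac{l+1}{n^\lambda},t)-W(s,t)=\Bigl[W(\tfrac{l+1}{n^\lambda},t)-W(\tfrac{l}{n^\lambda},t)\Bigr]-\Bigl[W(s,t)-W(\tfrac{l}{n^\lambda},t)\Bigr].
\]
Both pieces measure the deviation of $W(\cdot,t)$ from $W(l/n^\lambda,t)$ at points of $[l/n^\lambda,(l+1)/n^\lambda]$, so the third bracket is bounded by $2Z_l$. This gives the crude total
\[
\max_l|\Delta_n(l,t)|+\theta\cdot 2Z_l+(1-\theta)Z_l\le\max_l|\Delta_n(l,t)|+2Z_l,
\]
using $\theta\le 1$.

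Combining the two cases yields the stated inequality. The only point needing care is the bookkeeping of the constant $2$, which the decomposition above delivers. One also needs that $Z_l$ is taken over the closed strip, so that $s=(l+1)/n^\lambda$ is allowed in its supremum.
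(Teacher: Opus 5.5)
Your proof is correct and follows the paper's argument essentially step for step. You use the same convex-combination decomposition on the interpolation strips, the same bound $(1-\theta)Z_l+2\theta Z_l\le 2Z_l$ obtained via the triangle inequality through $W(l/n^\lambda,t)$, and the same bound by $Z_L$ on $[L/n^\lambda,1]$. Your explicit check that $1\le (L+1)/n^\lambda\le 2$ shows why $Z_L$ covers that region and where the extension of $W$ to $[0,2]\times[0,1]$ is used; the paper leaves this implicit.
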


\begin{proof}
If $s\in[l/n^\lambda,(l+1)/n^\lambda)$ with $l\leq L-1$, write
$\theta=n^\lambda s-l\in[0,1)$.  Then
\begin{align*}
  W_n(s,t)-W(s,t)
  &=(1-\theta)\Delta_n(l,t)+\theta\Delta_n(l+1,t)\\
  &\quad+(1-\theta)\!\left[W\!\left(\tfrac{l}{n^\lambda},t\right)-W(s,t)\right]
       +\theta\!\left[W\!\left(\tfrac{l+1}{n^\lambda},t\right)-W(s,t)\right].
\end{align*}
The first bracket is bounded by $Z_l$ and the second, by the triangle
inequality through $W(l/n^\lambda,t)$, by $2Z_l$.  If $s\geq L/n^\lambda$,
then $W_n(s,t)-W(s,t)=\Delta_n(L,t)+W(L/n^\lambda,t)-W(s,t)$, and the last
difference is bounded by $Z_L$.
\end{proof}

Thus it suffices to prove that for $n$ large enough,
\begin{align}
  P\!\left(\max_{0\leq l\leq L}Z_l
    >Cn^{-\lambda/2}(\log n)^{1/2}\right)&\leq n^{-q},
  \label{eq:bound-Z}\\
  P\!\left(\max_{0\leq l\leq L}\sup_{t\in[0,1]}|\Delta_n(l,t)|
    >C\,r_n(\log n)^{1/2}\right)&\leq n^{-q}.
  \label{eq:bound-Delta}
\end{align}

\subsection{Study of~\eqref{eq:bound-Z}}

For each $l$, the field
\[
  B_l(\rho,t)=n^{\lambda/2}\,W\!\left(\left(\tfrac{l}{n^\lambda},
    \tfrac{l}{n^\lambda}+\tfrac{\rho}{n^\lambda}\right]\times(0,t]\right),
  \qquad(\rho,t)\in[0,1]^2,
\]
is a Brownian sheet, and $Z_l=n^{-\lambda/2}\sup_{[0,1]^2}|B_l|$.
Let $m=\E\sup_{[0,1]^2}|B_l|$, which does not depend on $l$.
By Lemma~\ref{lem:borell} (maximal variance~$1$) with
$x=\sqrt{2(q+\lambda+1)\log n}$ and a union bound over the
$L+1\leq n^\lambda+1$ strips,
\[
  P\!\left(\max_l Z_l>n^{-\lambda/2}\!\left(m+\sqrt{2(q+\lambda+1)\log n}
    \right)\right)\leq 2(n^\lambda+1)n^{-(q+\lambda+1)}\leq 4n^{-q-1}.
\]
We get the bound~\eqref{eq:bound-Z}.

\subsection{Study of~\eqref{eq:bound-Delta}}

We begin studying $D_k$.  Let $q'=q+\lambda+7$, $a_n=\alpha_{q'}r_n$, with
$\alpha_{q'}$ given by (H3), and put
\[
  F_k=\left\{\sup_{t\in[0,1]}|D_k(t)|\leq a_n\right\},
  \qquad G_n=\bigcap_{k=1}^L F_k.
\]
By (H3) and (H1), $\pi_n:=P(F_k^c)$ does not depend on $k$ and
$\pi_n\leq n^{-q'}$ for $n\geq n_{q'}$; hence
\[
  P(G_n^c)\leq L\pi_n\leq n^{\lambda-q'}\leq n^{-q-1}.
\]
We apply Lemma~\ref{lem:truncation} to each $D_k$.  By (H2), $\E D_k(t)=0$
and $\sup_t|D_k(t)|\leq Y_k:=n+\sup_t|\wt W_k(t)|$, so that
$\E Y_k^2\leq 2n^2+8\leq 10n^2$ by Doob's inequality.  By (H1) the function
$m_n(t)=\E[D_k(t)\mathbf{1}_{F_k}]$ does not depend on $k$, and
\begin{equation}\label{eq:mn-bound}
  |m_n(t)|\leq(10n^2)^{1/2}\pi_n^{1/2}
  \leq\sqrt{10}\,n^{1-q'/2}=\sqrt{10}\,n^{-(q+\lambda+5)/2}.
\end{equation}
On $G_n$ we have $D_k(t)=D_k(t)\mathbf{1}_{F_k}=D'_k(t)+m_n(t)$ for every
$k$ and $t$, hence
\begin{equation}\label{eq:Delta-decomp}
  \Delta_n(l,t)=n^{-\lambda/2}\sum_{k=1}^l D'_k(t)+n^{-\lambda/2}l\,m_n(t),
  \qquad
  |n^{-\lambda/2}l\,m_n(t)|\leq n^{\lambda/2}|m_n(t)|\leq n^{-3/2},
\end{equation}
using $l\leq n^\lambda$ and~\eqref{eq:mn-bound}.  Moreover, for fixed $t$,
the variables $D'_1(t),\ldots,D'_L(t)$ are independent (by (H1)) and centered,
and by Lemma~\ref{lem:truncation}
\begin{equation}\label{eq:Dk-bound}
  |D'_k(t)|\leq a_n+|m_n(t)|\leq 2a_n
  \quad\text{surely, for every }k\leq L\text{ and }t\in[0,1],
\end{equation}
because $|m_n(t)|\leq\sqrt{10}\,n^{-5/2}\leq\alpha_{q'}n^{-1}\leq a_n$ for
$n$ large, by~\eqref{eq:mn-bound} and $r_n\geq n^{-1}$.

\subsubsection{Partial sums on a grid}

Let $M=\ceil{n^{\lambda+3}}$, $t_i=i/M$, $i=0,\ldots,M$.
For fixed $l\leq L$ and $i\leq M$, the variables $X_k=D'_k(t_i)$, $k\leq l$,
are independent, centered and, by~\eqref{eq:Dk-bound}, bounded by the
deterministic constant $c=2a_n$, uniformly in $k$, $l$ and $i$.
Lemma~\ref{lem:hoeffding} therefore applies with $c=2a_n$ and gives, since
$l\leq n^\lambda$,
\[
  P\!\left(n^{-\lambda/2}\left|\sum_{k=1}^l D'_k(t_i)\right|>y\right)
  \leq 2\exp\!\left(-\frac{y^2 n^\lambda}{2l(2a_n)^2}\right)
  \leq 2\exp\!\left(-\frac{y^2}{8a_n^2}\right).
\]
Choose $y_n=a_n\sqrt{8(q+2\lambda+4)\log n}$.  The number of pairs $(l,i)$ is
at most $(n^\lambda+1)(M+1)\leq 4n^{2\lambda+3}$, so a union bound yields
\[
  P\!\left(\max_{l\leq L}\max_{i\leq M}
    n^{-\lambda/2}\left|\sum_{k=1}^l D'_k(t_i)\right|>y_n\right)
  \leq 8n^{2\lambda+3}n^{-(q+2\lambda+4)}=8n^{-q-1}.
\]
Together with~\eqref{eq:Delta-decomp} and the bound on $P(G_n^c)$, outside
an event of probability at most $9n^{-q-1}$,
\begin{equation}\label{eq:grid-bound}
  \max_{l\leq L}\max_{i\leq M}|\Delta_n(l,t_i)|
  \leq y_n+n^{-3/2}\leq C\,r_n(\log n)^{1/2},
\end{equation}
where we used $n^{-3/2}\leq r_n$.

\subsubsection{Between grid points}

Let $t\in[t_i,t_{i+1}]$.  Then
\[
  |\Delta_n(l,t)-\Delta_n(l,t_i)|
  \leq\left|W_n\!\left(\tfrac{l}{n^\lambda},t\right)
             -W_n\!\left(\tfrac{l}{n^\lambda},t_i\right)\right|
      +\left|W\!\left(\tfrac{l}{n^\lambda},t\right)
             -W\!\left(\tfrac{l}{n^\lambda},t_i\right)\right|.
\]
By (H2), the first term is at most
$n^{-\lambda/2}\cdot L\cdot n\cdot M^{-1}\leq n^{\lambda/2+1}n^{-(\lambda+3)}
 \leq n^{-2}$, deterministically.
For the second, for each $i<M$ the field
$(s,\rho)\mapsto M^{1/2}W((0,s]\times(t_i,t_i+\rho/M])$ on $[0,1]^2$ is a
Brownian sheet, and the second term is bounded by $M^{-1/2}$ times its
supremum.  By Lemma~\ref{lem:borell} with
$x=\sqrt{2(q+\lambda+4)\log n}$ and a union bound over the
$M\leq 2n^{\lambda+3}$ intervals,
\[
  P\!\left(\max_{i<M}\sup_{s\in[0,1],\,t\in[t_i,t_{i+1}]}
    |W(s,t)-W(s,t_i)|>M^{-1/2}\!\left(m+\sqrt{2(q+\lambda+4)\log n}
    \right)\right)\leq 4n^{-q-1}.
\]
Since $M^{-1/2}\leq n^{-1}\leq r_n$, combining with~\eqref{eq:grid-bound}
we obtain the bound~\eqref{eq:bound-Delta}.  \qed

\begin{remark}\label{rem:rate-lost}

The construction of Section~\ref{sec:construction} makes sense for every
$\lambda>0$; nothing in the definition of the strips, of the strip Brownian
motions $\wt W_k$, of the transport processes $\wt W_k^{(n)}$ or of the
interpolated process $W_n$ requires $\lambda<\tfrac{1}{5}$.  The restriction
$\lambda\in(0,\tfrac{1}{5})$ appears only in the proof of almost sure
convergence in~\cite[pp.~324--325]{BFR}, where Markov's inequality of order
$M$ leads to an exponent that can be made smaller than $-1$ only if
$\lambda<\tfrac{1}{5}$. 

The present paper uses from~\cite{BFR} only the construction
(hypotheses (H1)--(H2)) and from~\cite{GG} only the one-parameter rate of
each strip (hypothesis (H3)); none of these depends on $\lambda$.  Therefore
Theorem~\ref{thm:main-bound} and Theorem~\ref{thm:main-rate} hold for every
$\lambda>0$, and in particular for $\lambda\geq 1$, which is the range giving
the rate $n^{-1/2}(\log n)^3$.  
\end{remark}

\section{Probabilistic tools}
\label{sec:tools}

We collect here the probabilistic inequalities used in
Section~\ref{sec:proof}.  We begin with a Borell--TIS inequality.

\begin{lemma}\label{lem:borell}
Let $\Gamma$ be a centered Gaussian field with continuous paths on a compact
set $T$, let $\sigma^2=\sup_{x\in T}\E\Gamma(x)^2$ and
$m=\E\sup_T|\Gamma|$.  Then $m<\infty$ and, for every $x>0$,
\[
  P\!\left(\sup_T|\Gamma|>m+x\right)\leq 2e^{-x^2/(2\sigma^2)}.
\]
\end{lemma}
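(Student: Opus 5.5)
The argument has two parts, one for each claim in Lemma~\ref{lem:borell}. First show that $m<\infty$. Then establish the concentration bound for $\sup_T\Gamma$ and transfer it to $\sup_T|\Gamma|$.

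\emph{Finiteness of $m$.} Since $\Gamma$ has continuous paths on the compact set $T$, $S=\sup_T|\Gamma|$ is almost surely finite. Moreover $S=\sup_{x\in Q}|\Gamma(x)|$ for a countable dense $Q\subset T$, so $S$ is measurable. Finiteness of $\E S$ follows from the Landau--Shepp--Fernique theorem: a Gaussian process with almost surely bounded paths has $\E e^{\varepsilon S^2}<\infty$ for some $\varepsilon>0$. Alternatively, it follows from the Borell--TIS argument itself. Pick a median $M_0$ of $S$. The median form of the Gaussian concentration inequality gives $P(S>M_0+x)\le e^{-x^2/(2\sigma^2)}$, and integrating the tail yields $m\le M_0+\sigma\sqrt{\pi/2}<\infty$.

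\emph{Concentration.} First reduce to finitely many points. Take finite sets $T_j\uparrow Q$. On each $T_j$, write the vector $(\Gamma(x))_{x\in T_j}$ as $A g$, with $g$ a standard Gaussian vector in $\mathbb{R}^d$. The map $g\mapsto\max_{x\in T_j}(Ag)_x$ is Lipschitz with constant $\max_x|A_{x\cdot}|=\max_x(\E\Gamma(x)^2)^{1/2}\le\sigma$. The Gaussian concentration inequality for Lipschitz functions (Tsirelson--Ibragimov--Sudakov, e.g.\ via the Gaussian isoperimetric inequality or the Maurey--Pisier/Herbst argument) then gives, with $S_j^+=\max_{T_j}\Gamma$,
\[
  P\bigl(S_j^+>\E S_j^++x\bigr)\le e^{-x^2/(2\sigma^2)}.
\]
Apply this twice. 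The first application is to the process $\Gamma$ itself. The second is to the process $-\Gamma$ on the same index set, or equivalently to the field $\pm\Gamma$ indexed by $T_j\times\{\pm1\}$, which has the same variance bound.

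The cleanest route is to work directly with $|\Gamma|$, since $\max_{T_j}|\Gamma(x)|=\max_{(x,\epsilon)}\epsilon\Gamma(x)$. This is a maximum of a centered Gaussian family with variance at most $\sigma^2$, and it is again $\sigma$-Lipschitz in $g$. The one-sided inequality above then gives
\[
  P\Bigl(\max_{T_j}|\Gamma|>\E\max_{T_j}|\Gamma|+x\Bigr)\le e^{-x^2/(2\sigma^2)},
\]
which is even better than the stated bound with the factor $2$.

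Finally let $j\to\infty$. By monotone convergence, $\E\max_{T_j}|\Gamma|\uparrow m$ and $\max_{T_j}|\Gamma|\uparrow S$. The events $\{\max_{T_j}|\Gamma|>m+x\}$ are contained in $\{\max_{T_j}|\Gamma|>\E\max_{T_j}|\Gamma|+x\}$ and increase to $\{S>m+x\}$, so the bound passes to the limit.

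The main obstacle is the dimension-free Gaussian concentration inequality for Lipschitz functions. I would cite it as a standard result rather than reprove it; the remaining steps are routine approximation.
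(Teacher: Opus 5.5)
Your proof is correct, but it is organized differently from the paper's. The paper invokes the infinite-dimensional Borell--TIS theorem \cite[Theorem~2.1.1]{AT} directly. Its hypothesis is a.s.\ boundedness, which follows from path continuity, and its conclusion includes $\E\sup_T\Gamma<\infty$, which is where $m<\infty$ comes from. The paper applies this theorem separately to $\Gamma$ and to $-\Gamma$, then uses $\E\sup_T(\pm\Gamma)\le m$ and a union bound; the union bound is what produces the factor $2$.

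You instead derive the infinite-dimensional statement yourself, from the finite-dimensional Lipschitz concentration inequality, by approximation along finite sets $T_j\uparrow Q$. You also handle $|\Gamma|$ in one step through the symmetrized family $(x,\epsilon)\mapsto\epsilon\Gamma(x)$ on $T\times\{\pm1\}$, which has the same variance bound. This gives the sharper bound $e^{-x^2/(2\sigma^2)}$, without the factor $2$.

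The cost of your route is a separate argument for $m<\infty$ (Fernique's theorem, or the concentration inequality itself) plus the monotone limit, and you carry out both correctly. The gain is an argument that is self-contained modulo finite-dimensional Gaussian concentration, with a better constant. The same doubling trick, applied directly to \cite[Theorem~2.1.1]{AT} on the compact set $T\times\{\pm1\}$, would also remove the factor $2$ from the paper's proof, though the factor is harmless in the applications.

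One small caution about attribution. To get the exponent $x^2/(2\sigma^2)$ around the mean, you need the sharp form of the inequality: either Tsirelson--Ibragimov--Sudakov, or Herbst's argument via the Gaussian log-Sobolev inequality. The Maurey--Pisier argument only yields $e^{-2x^2/(\pi^2\sigma^2)}$, and Gaussian isoperimetry gives the median-centered version.
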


\begin{proof}
Since $\Gamma$ has continuous paths on the compact set $T$, it is a.s.\
bounded: for almost every $\omega$ the map $t\mapsto\Gamma(t,\omega)$ is
continuous on a compact set, hence bounded. The Borell--TIS
inequality~\cite[Theorem~2.1.1]{AT}, applied to $\Gamma$ and to $-\Gamma$,
then gives
\[
  P\!\left(\sup_T \Gamma > \E\sup_T \Gamma + x\right)\leq e^{-x^2/(2\sigma^2)},
  \qquad
  P\!\left(\inf_T \Gamma < \E\inf_T \Gamma - x\right)\leq e^{-x^2/(2\sigma^2)}.
\]
Since $\E\sup_T(-\Gamma)=-\E\inf_T\Gamma\leq\E\sup_T|\Gamma|=m$, combining
the two inequalities yields the stated bound.
\end{proof}

We will also use the following Hoeffding inequality (see~\cite{Hoe}).

\begin{lemma}\label{lem:hoeffding}
If $X_1,\ldots,X_l$ are independent, centered and $|X_k|\leq c$ almost
surely for every $k$, then for every $x>0$
\[
  P\!\left(\left|\sum_{k=1}^l X_k\right|>x\right)
  \leq 2\exp\!\left(-\frac{x^2}{2lc^2}\right).
\]
\end{lemma}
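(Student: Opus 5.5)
This is the Hoeffding inequality for bounded, centered, independent summands. I will prove it by the exponential Chernoff method, which reduces everything to one sharp estimate on the moment generating function of a single bounded centered variable.

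\emph{Step 1 (the one-variable estimate).} Let $X$ be centered with $|X|\leq c$ almost surely. I want to show that for every $\theta\in\mathbb{R}$,
\[
  \E e^{\theta X}\leq\cosh(\theta c)\leq e^{\theta^2c^2/2}.
\]
Since $u\mapsto e^{\theta u}$ is convex, on $[-c,c]$ it lies below its chord:
\[
  e^{\theta u}\leq\frac{c-u}{2c}e^{-\theta c}+\frac{c+u}{2c}e^{\theta c}.
\]
Put $u=X$ and take expectations. Because $\E X=0$, the terms linear in $u$ vanish and we are left with $\cosh(\theta c)$. For the second inequality, compare the Taylor series term by term: $(2j)!\geq 2^j j!$ gives $\cosh y\leq e^{y^2/2}$.

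\emph{Step 2 (Chernoff bound).} Write $S=\sum_{k\leq l}X_k$. By independence and Step 1, for $\theta>0$,
\[
  \E e^{\theta S}=\prod_{k\leq l}\E e^{\theta X_k}\leq e^{l\theta^2c^2/2}.
\]
Markov's inequality then gives
\[
  P(S>x)\leq e^{-\theta x+l\theta^2c^2/2}.
\]
Take $\theta=x/(lc^2)$ to minimize the exponent. This yields $P(S>x)\leq\exp(-x^2/(2lc^2))$.

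\emph{Step 3 (two tails).} The variables $-X_k$ satisfy the same hypotheses, so the same bound holds for $P(-S>x)$. A union bound over the two events gives the factor $2$ in the statement.

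The only step with real content is Step 1. Note that we need neither the sharper Hoeffding constant $x^2/(2\sum(b_k-a_k)^2/4)$ nor Hoeffding's lemma in its general form. The symmetric interval $[-c,c]$ together with the cosh bound already gives exactly the stated constant $2lc^2$.
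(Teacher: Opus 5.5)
Your proof is correct, and it argues differently from the paper only in that the paper gives no argument at all: it cites Hoeffding's original paper. Hoeffding's own proof is the same exponential-moment method you use. The chord bound for $u\mapsto e^{\theta u}$ on $[-c,c]$ and $\E X=0$ give $\E e^{\theta X}\le\cosh(\theta c)$. The termwise comparison $(2j)!\ge 2^j j!$ gives $\cosh y\le e^{y^2/2}$. The choice $\theta=x/(lc^2)$ then yields exactly the exponent $-x^2/(2lc^2)$, and the two-tail union bound supplies the factor $2$.

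What your write-up adds is self-containedness. It also makes visible that the only properties of the summands used are $\E X_k=0$ and the deterministic bound $|X_k|\le c$ needed for the chord inequality. This is why the paper uses Lemma~\ref{lem:truncation} to ensure $|D'_k(t)|\le 2a_n$ holds surely, not merely with high probability.

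One correction to your closing remark: the general Hoeffding exponent $2x^2/\sum_k(b_k-a_k)^2$ is not sharper here. With $[a_k,b_k]=[-c,c]$ it equals $x^2/(2lc^2)$, so your $\cosh$ bound is the symmetric special case of Hoeffding's lemma and loses nothing. Also, dividing by $c$ in the chord and in $\theta$ presupposes $c>0$; the case $c=0$ is trivial because then the sum vanishes almost surely.
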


The last technical result is a truncation lemma.

\begin{lemma}\label{lem:truncation}
Let $D=(D(t))_{t\in[0,1]}$ be a process with $\E D(t)=0$ for every $t$ and
$\sup_t|D(t)|\leq Y$ with $\E Y^2<\infty$.  For $a>0$ let
$F=\{\sup_t|D(t)|\leq a\}$ and
\[
  m(t)=\E\!\left[D(t)\mathbf{1}_F\right],\qquad
  D'(t)=D(t)\mathbf{1}_F-m(t).
\]
Then $\E D'(t)=0$, $|m(t)|\leq(\E Y^2)^{1/2}P(F^c)^{1/2}$ and, surely,
$|D'(t)|\leq a+|m(t)|$ for every $t$.
\end{lemma}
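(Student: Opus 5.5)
The plan is to truncate at the level of $D(t)\mathbf{1}_F$ and then verify the three claims of Lemma~\ref{lem:truncation} one after another, using only $\E D(t)=0$ and the domination $\sup_t|D(t)|\leq Y$. Integrability comes first. We have $|D(t)\mathbf{1}_F|\leq|D(t)|\leq Y$, and $Y\in L^2\subset L^1$. So $m(t)=\E[D(t)\mathbf{1}_F]$ is a well-defined finite number for each $t$. By linearity, $\E D'(t)=\E[D(t)\mathbf{1}_F]-m(t)=0$, and the centering claim is settled. (Measurability of $F$ is implicit in the hypotheses, since $P(F^c)$ appears in the statement.)

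For the bound on $m(t)$, I will use the centering of $D$ to move the indicator to the complementary event. Since $\E D(t)=0$, we have $m(t)=\E[D(t)]-\E[D(t)\mathbf{1}_{F^c}]=-\E[D(t)\mathbf{1}_{F^c}]$. The Cauchy--Schwarz inequality together with $|D(t)|\leq Y$ then gives
\[
  |m(t)|\leq\E\bigl[Y\mathbf{1}_{F^c}\bigr]\leq(\E Y^2)^{1/2}P(F^c)^{1/2}.
\]
This rewriting through $F^c$ is the only step that needs any idea; a direct bound on $\E[D(t)\mathbf{1}_F]$ would only give $a$, which is useless for the application in Section~\ref{sec:proof}.

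The sure bound is pointwise. On $F$ we have $|D(t)\mathbf{1}_F|=|D(t)|\leq\sup_u|D(u)|\leq a$, and on $F^c$ this quantity vanishes. So $|D(t)\mathbf{1}_F|\leq a$ for every $\omega$ and every $t$, and the triangle inequality yields $|D'(t)|\leq a+|m(t)|$ surely. I do not expect any real obstacle: the argument is elementary, and the only points to watch are the integrability check and the use of $\E D(t)=0$ for the bound on $m$.
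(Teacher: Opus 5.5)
Your proposal is correct and follows essentially the same route as the paper: you rewrite $m(t)=-\E[D(t)\mathbf{1}_{F^c}]$ using $\E D(t)=0$ and apply Cauchy--Schwarz, and you get the sure bound from $|D(t)\mathbf{1}_F|\leq a$ together with the triangle inequality. The integrability and centering checks you add are details the paper leaves implicit.
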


\begin{proof}
Since $\E D(t)=0$, we have $m(t)=-\E[D(t)\mathbf{1}_{F^c}]$, and
Cauchy--Schwarz gives the bound on $m(t)$.  The bound on $D'(t)$ holds on
every sample point, because $|D(t)\mathbf{1}_F|\leq a$ everywhere.
\end{proof}

\section{Declaration of generative AI and AI-assisted technologies in the manuscript preparation process.}
During the preparation of this work the author used Claude (Anthropic) to assist with LaTeX typesetting of formulas, translation and language editing, and the verification of proofs and calculations. After using this tool, the author reviewed and edited the content as needed and takes full responsibility for the content of the published article.


\end{document}